  \numberwithin{equation}{section}
  \theoremstyle{definition}  
   \newtheorem{defn}{Definition}[section]
   \newtheorem{rmk}[defn]{Remark}
  \theoremstyle{plain}  
   \newtheorem{thm}[defn]{Theorem}
   \newtheorem{lem}[defn]{Lemma}
   \newtheorem{cor}[defn]{Corollary}
  \theoremstyle{remark} 
 \newcommand{\numberthis}{\refstepcounter{equation}\tag{\theequation}} 
  \DeclareMathOperator{\spn}{span}
\newcommand{\emailaddress}[1]{\newline{\sf#1}}
\title{Real Normal Operators and Williamson's Normal Form}
\date{}
\author[1]{Tiju Cherian John}
\author[2]{B.V. Rajarama Bhat}
\affil[1, 2]{Indian Statistical Institute,  8th Mile Mysore Road, R.V. College Post, Bangalore-560059, India. \emailaddress{ tijucherian@gmail.com, bhat@isibang.ac.in}}
\begin{document}
\maketitle
 \begin{abstract}
  A simple proof is provided to show that any bounded normal operator on a real Hilbert space is orthogonally equivalent to its transpose(adjoint). 
  A structure theorem for invertible skew-symmetric operators, which is analogous to the finite dimensional situation is also proved using elementary techniques.  
  The second result is used to establish the main theorem of this article, which is a generalization of Williamson's normal form for bounded positive operators on infinite dimensional separable Hilbert spaces. This has applications in the study of infinite mode Gaussian states.\\
\textbf{Keywords:}
Spectral theorem, Real normal operator, Williamson's normal form, Infinite mode quantum systems\\
\textbf{2010 Mathematics Subject classification:}
Primary 47B15.
 \end{abstract}

\section{Introduction}
Williamson's  theorem characterizing positive definite
real matrices of even order is fairly well-known and is a very
useful result. We begin by stating this theorem, which we are going
to generalize. Let $n$ be a natural number. Suppose $A$ is a
strictly positive (hence invertible) real matrix of order $2n\times
2n$. Then there exists a symplectic matrix $L$ of order $2n\times 2n$
and a strictly positive diagonal matrix $P$ of order $n\times n$, such that
$$A= L^T
\begin{bmatrix}
  P &0\\
  0&P
\end{bmatrix}L,
$$  where $L^T$ denotes the transpose of $L.$ Here the matrix $P$ is uniquely determined up to permutation and the
diagonal entries are  known as symplectic eigenvalues of $A$.

Symplectic transformations appear naturally in several physical models in classical mechanics, quantum mechanics, optics etc. For example, the collection of $2n\times 2n$ symplectic matrices, $n\in \mathbb{N}$, arises as the group of linear transformations preserving the classical Poisson Brakets and the canonical commutation relations (CCR) among the $n$ pairs of position and momentum observables \cite{ADMS95}. A similar situation occurs in the infinite dimensional situation also, in the context of Shales's theorem on the Bogoliubov automorphisms of Weyl commutation relations\cite{Sha62,  BhSr}.  In these scenarios, symplectic transformations play the role similar to that played by unitary or orthogonal transformations in Euclidean geometry. Thus the importance of Williamson's normal form in these contexts is similar to the importance of spectral theorem in linear algebra.

The  theorem mentioned in the first paragraph was first proved by J. Williamson in \cite{Wil36}. It
has been extensively used to understand the symplectic geometry and
has applications in Harmonic Analysis and  Physics (See \cite{Gos11}).
 In recent years there is somewhat renewed interest in the field \cite{BhJa15,IdGaWo17} in
view of its relevance in quantum information theory and its
usefulness to understand symmetries of finite mode quantum Gaussian
states \cite{Par13}. We wish to extend some of the results of Parthasarathy 
\cite{Par13}, to infinite mode Gaussian systems  (see \cite{BhJoSr18}) and for the purpose we need a generalization of the
Williamson's theorem to separable infinite dimensional Hilbert
spaces. Of course, such a generalization would be of independent
interest. Here we are precisely in the same situation as B\"ottcher and Pietsch \cite{Bot12} regarding operator theory on real Hilbert spaces, namely we are unable to find any source in literature where such a theorem has been worked out. The main goal of the present paper is to fill this apparent gap. In the process we find shortcuts and simplifications of some known results on real normal operators.

In this subject it is necessary  to deal with real linear operators on real
Hilbert spaces and their complexifications. We know that non-real eigenvalues of real matrices
appear in conjugate pairs. We need appropriate generalization of
this result in infinite dimensions. This is realised in Theorem \ref{thm:A similar A^T}, where we show that
every normal operator on a real Hilbert space is orthogonally
equivalent to its adjoint. This result is known \cite{Vis78}, however we
have an elementary direct proof of this fact. A more delicate result characterizing complex matrices unitarily equivalent to their adjoints can be seen in \cite{Gaja12}. The spectral theorem is a major tool in understanding real normal operators and  there is substantial literature on this, as can be seen from the following papers and references there of:  Wong \cite{Wong69}, Goodrich \cite{Goo72}, Viswanath \cite{Vis78}, Agrawal and Kulkarni \cite{AgKu94}. 
We prove an infinite dimensional version of Williamson's theorem, using Theorem \ref{cor:invertible} and some elements  of spectral theorem. This helps us to define symplectic spectrum for positive invertible operators on real Hilbert spaces. 

\section{Preliminary definitions and observations}
  In this Section, we shall collect the basic definitions and observations relevant to our work on real Hilbert spaces. To be consistent with the existing literature, we keep them similar to what is seen in \cite{Wong69} and \cite{Goo72}.
\begin{defn}
Let $A$ be a bounded operator on a real Hilbert space $(H,\left\langle \cdot,\cdot  \right\rangle)$. Its transpose $A^{T}$, is defined by $\left\langle Ax,y \right\rangle = \left\langle x,A^{T}y \right\rangle, \forall x,y \in H$. Such an $A^{T}$ exists uniquely as a bounded operator on $H$. $A$ is said to be normal if $AA^{T} = A^{T}A$.
\end{defn}
Following standard notation, for complex linear operators on complex Hilbert spaces  star ( $^*$ )  would denote the adjoint. Inner products on our complex Hilbert spaces would be anti-linear in the first variable. 

\begin{defn}
  Let $H$ be a real Hilbert space. Then the complexification of $H$ is the complex Hilbert space $\mathcal{H} :=H+iH:= \{x+i \cdot y:x\in H,y\in H\}$ with  addition, complex-scalar product and inner product defined in the obvious way.
\end{defn}

For example, if $\left\langle \cdot,\cdot \right\rangle$ is the inner product on $H$ then the inner product on $\mathcal{H}$ is given by
 $\left\langle x_{1} + i\cdot y_{1}, x_{2} + i \cdot y_{2} \right\rangle_{\mathbb{C}} := \left\langle x_{1} ,x_{2} \right\rangle + \left\langle y_{1},y_{2} \right\rangle + i \left( \left\langle x_{1},y_{2} \right\rangle - \left\langle y_{1},x_{2} \right\rangle \right)$. Also note that the mapping $x \mapsto x+i \cdot0$ provides an embedding of $H$ into $\mathcal{H}$ as a real Hilbert space.

\begin{defn}\label{def:spectrum}
  Let $A$ be a bounded operator on the real Hilbert space $H$. Define an operator $\hat{A}$ on the complexification $\mathcal{H}$ of
  $H$ by $\hat{A} (x+iy) = Ax +iAy$. Then $\hat{A}$ is well defined, complex linear and bounded, with $({\hat{A})}^* (x+iy) = A^Tx +i\cdot A^Ty = \widehat{(A^T)}(x+iy)$ and $\|\hat{A}\| = \|A\|$. If $A$ is normal then $\hat{A}$ is also normal. $\hat{A}$ is called the complexification of $A$.
Define the spectrum of $A$, denoted by $\sigma(A)$, to be the
spectrum of $\hat{A}.$
\end{defn}
Note that the  definition above of spectrum matches with  the usual
notion of eigenvalues of a finite dimensional real matrix.

\begin{defn}\label{def:cyclic-vec}
   Let $A$ be a bounded normal operator on a real Hilbert space $H$. Then a vector $x\in H$ is is said to be  {\em transpose cyclic\/} for $A$, if
the  set $\{ A^n(A^T)^mx: m, n\geq 0\}$ is total in $H.$
It is said to by {\em cyclic\/} for $A$,  if $\{ A^nx: n\geq 0\}$ is total
in $H.$
\end{defn}

\section {Symmetry of a real normal operator}
Here we prove that any normal operator on a real Hilbert space is orthogonally equivalent to its transpose (or adjoint). During the preparation of this manuscript we found that this result has appeared as Corollary 2.7 in Vishwanath \cite{Vis78}, but our proof is different and is very elementary. It just exploits the geometry of real Hilbert space. 
\begin{thm} \label{thm:A similar A^T}
Let $H$ be a real Hilbert space and let $A\in B(H)$ be a normal operator. Then there exists an orthogonal transformation $U \in B(H)$, such that
\begin{equation} \label{eq: A similar A^T}
UAU^{T} = A^{T}.
\end{equation}
Further,  $U$ can be chosen such that $U^T = U$.
\end{thm}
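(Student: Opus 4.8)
The plan is to reduce to the case of a single transpose-cyclic vector and then solve that case by hand. A standard maximality/Zorn argument (exactly as in Wong \cite{Wong69}) lets us decompose $H$ into an orthogonal direct sum $H = \bigoplus_{\alpha} H_\alpha$ of closed subspaces, each of which is jointly invariant under $A$ and $A^T$, and each of which contains a transpose-cyclic vector $x_\alpha$ for the restriction $A|_{H_\alpha}$. Because the construction of the orthogonal $U$ and the verification of $UAU^T = A^T$ will be carried out one summand at a time and then assembled as $U = \bigoplus_\alpha U_\alpha$, and because a direct sum of symmetric orthogonal operators is again symmetric and orthogonal, it suffices to treat a single block. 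So I may assume from the outset that $A$ has a transpose-cyclic vector $x \in H$.

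**Next I would** build $U$ on the dense subspace spanned by $\{A^n (A^T)^m x : m,n \ge 0\}$ by the only formula that has a chance of working: send $A^n (A^T)^m x$ to $A^m (A^T)^n x$, i.e. swap the roles of $A$ and $A^T$. Concretely, set $U\big(p(A,A^T)x\big) = p(A^T, A)x$ for any real polynomial $p$ in two commuting indeterminates (they commute on the nose because $A$ is normal). The crux is that this map is well-defined and isometric, and for that one computes, for monomials,
\begin{align*}
\langle A^n (A^T)^m x,\ A^k (A^T)^\ell x\rangle
&= \langle A^{n+\ell}(A^T)^{m+k} x,\ x\rangle,\\
\langle A^m (A^T)^n x,\ A^\ell (A^T)^k x\rangle
&= \langle A^{m+\ell}(A^T)^{n+k} x,\ x\rangle,
\end{align*}
using only normality and the defining relation $\langle Ax,y\rangle = \langle x, A^Ty\rangle$ to move everything onto $x$. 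These two scalars agree because the exponent on $A$ in the first equals the exponent on $A^T$ in the second and vice versa, and because $\langle A^a (A^T)^b x, x\rangle = \langle (A^T)^b x, (A^T)^a x\rangle = \langle A^a x, A^b x\rangle = \langle (A^T)^b(A)^a x,x\rangle$ is already symmetric in $a,b$ — one checks $\langle A^a(A^T)^bx,x\rangle = \langle A^b(A^T)^ax,x\rangle$ directly. Hence $U$ extends uniquely to an isometry of $H$; since $U$ maps the total set $\{A^n(A^T)^m x\}$ onto the total set $\{A^m(A^T)^n x\} = \{A^n(A^T)^m x\}$ (same set!), its range is dense, so $U$ is a surjective isometry, i.e. orthogonal.

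**Then I would** verify the two remaining algebraic facts on the dense subspace, where they become formal identities in the polynomial ring. First, $UAU^T = A^T$: note $U^T = U^{-1}$, and $U^{-1}$ is given by the same swapping rule, so $U A U^{-1}$ applied to $p(A^T,A)x$ gives $U A\, p(A,A^T) x = U\big((\text{mult.\ by }A)\,p(A,A^T)\big)x = (\text{mult.\ by }A^T)\,p(A^T,A)x = A^T\, p(A^T,A)x$; since such vectors are dense, $UAU^{-1}=A^T$. Second, $U^T = U$, equivalently $U^2 = I$: applying the swap twice, $U^2\big(A^n(A^T)^m x\big) = A^n(A^T)^m x$, so $U^2 = I$ on a dense set, hence everywhere; combined with $U$ orthogonal this gives $U^T = U^{-1} = U$.

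**The main obstacle** is purely the well-definedness/isometry step — showing the two inner-product expressions above coincide — since everything else is formal bookkeeping once $U$ exists. One must be careful that the argument uses \emph{only} normality $AA^T = A^TA$ (to treat $A$ and $A^T$ as commuting variables) and the transpose relation (to reduce inner products to the form $\langle q(A,A^T)x, x\rangle$), plus the symmetry $\langle A^a(A^T)^b x,x\rangle = \langle A^b(A^T)^a x,x\rangle$; no appeal to the complexification $\hat A$ or to spectral theory is needed, which is precisely what makes the proof ``very elementary.'' A minor point to handle cleanly is the passage from the transpose-cyclic block decomposition back to the full space, but that is routine since orthogonality and symmetry are preserved under orthogonal direct sums.
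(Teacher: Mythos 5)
Your proof is correct and follows essentially the same route as the paper: define $U$ on a transpose-cyclic block by swapping the roles of $A$ and $A^{T}$, verify that it preserves the inner product using normality, the transpose relation and the symmetry of the real inner product, check $UAU^{T}=A^{T}$ and $U^{2}=I$ on the dense span, and assemble the general case by Zorn's lemma. (One small computational slip: the second inner product reduces to $\langle A^{m+k}(A^{T})^{n+\ell}x,\,x\rangle$ rather than $\langle A^{m+\ell}(A^{T})^{n+k}x,\,x\rangle$; with that correction your stated justification via the symmetry $\langle A^{a}(A^{T})^{b}x,\,x\rangle=\langle A^{b}(A^{T})^{a}x,\,x\rangle$ goes through exactly.)
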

\begin{proof}
Let us assume first that $A$ has a transpose cyclic vector \textit{i.e.}
there exists $x \in H$ such that $\mathcal{E} := \{
A^n({A^T})^mx: n, m \geq 0\}$ is total in $H$. Define $U$ on
$\mathcal{E}$ by
\begin{equation}
U(A^n({A^T})^mx) = ({A^T})^nA^mx, ~\textnormal{for }  n,m\geq 0.
\end{equation}
Then for $n,m,k,l\geq 0,$
\begin{align*}
               \langle A^n({A^T})^mx, A^k({A^T})^lx \rangle
               &= \langle A^l({A^T})^kA^n({A^T})^mx, x \rangle \\
               &= \langle A^n({A^T})^m({A^T})^kA^lx, x \rangle \\
               &= \langle ({A^T})^kA^lx, ({A^T})^nA^mx \rangle \\
               &= \langle ({A^T})^nA^mx, ({A^T})^kA^lx \rangle \numberthis \label{eq:inner product} \\
               &= \langle U(A^n({A^T})^mx), U(A^k({A^T})^lx) \rangle , 
               \end{align*}
where the second equality follows from normality of $A$ and fourth equality because real inner product is symmetric. Since $U$ preserves the inner product on a total set $U$ can be extended as a bounded linear operator on $\overline{\textnormal{span}}\hspace{0.1cm} \mathcal{E}= H$. Note that the extended operator also preserves the inner product. We use the same symbol $U$ for the extended operator also. Thus $U$ is a real orthogonal transformation on $H$. Further using the definition of $U$,

note that
\begin{equation}
\langle U (A^m({A^T})^nx), A^k({A^T})^lx \rangle 
=\langle A^m({A^T})^nx, U(A^k({A^T})^lx) \rangle .
\end{equation}
Therefore,
\begin{equation} \label{eq: U sa}
 U^T=U.
 \end{equation}
Also,
\begin{align*}
             UAU^T (A^n({A^T})^mx) &= UAU(A^n({A^T})^mx) \\
                                 &= UA(({A^T})^nA^mx) \\
                                 &= U(A^{m+1}({A^T})^nx) \\
                                 &= ({A^T})^{m+1}A^nx \\
                                 &= A^T(A^n({A^T})^mx).
\end{align*}
Thus (\ref{eq: A similar A^T}) is satisfied on a total set which in turn proves the required relation on $H$, in the special case when $A$ has a transpose cyclic vector. The general case follows by a familiar application of Zorn's lemma.
\end{proof}

\begin{cor}\label{cor:unitary-equi}
$\hat{A}$ is unitarily equivalent to $(\hat{A})^{*} =\widehat{(A^T)}.$
\end{cor}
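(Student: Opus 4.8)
The plan is to simply complexify the conclusion of \Cref{thm:A similar A^T}. By that theorem there is an orthogonal $U\in B(H)$ with $UAU^{T}=A^{T}$ (indeed with $U^{T}=U$, though this refinement is not needed here). Passing to complexifications, I would first record the two elementary functorial facts about the map $B\mapsto\hat B$ that were already noted in \Cref{def:spectrum}: it is multiplicative, $\widehat{BC}=\hat B\hat C$, and it intertwines transpose with adjoint, $(\hat B)^{*}=\widehat{B^{T}}$. In particular, applying the latter to $U$ gives $(\hat U)^{*}=\widehat{U^{T}}$, so that $\hat U(\hat U)^{*}=\widehat{U U^{T}}=\widehat{I}=I$ and likewise $(\hat U)^{*}\hat U=I$; hence $\hat U$ is a unitary on $\mathcal{H}$.

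Next I would apply the hat map to the identity $UAU^{T}=A^{T}$. Multiplicativity yields
\begin{equation*}
\hat U\,\hat A\,\widehat{U^{T}}=\widehat{A^{T}},
\end{equation*}
and rewriting $\widehat{U^{T}}=(\hat U)^{*}$ and $\widehat{A^{T}}=(\hat A)^{*}$ turns this into $\hat U\,\hat A\,(\hat U)^{*}=(\hat A)^{*}$. Since $\hat U$ is unitary, this is exactly the assertion that $\hat A$ is unitarily equivalent to $(\hat A)^{*}=\widehat{(A^{T})}$, which is the claim.

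There is essentially no hard step here: the corollary is a one-line consequence of the theorem together with the bookkeeping already set up in \Cref{def:spectrum}. The only points that require (routine) verification are that complexification respects composition and sends transpose to adjoint, and that an orthogonal operator complexifies to a unitary — all of which are immediate from the definition $\hat B(x+iy)=Bx+iBy$. If one wanted, the stronger symmetric choice $U^{T}=U$ from \Cref{thm:A similar A^T} shows moreover that the implementing unitary $\hat U$ can be taken to satisfy $(\hat U)^{*}=\hat U$, i.e.\ $\hat U$ is a self-adjoint unitary, but this is a free bonus rather than something the statement demands.
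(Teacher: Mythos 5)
Your proof is correct and takes exactly the same route as the paper, which simply notes that the unitary $\hat{U}$ obtained by complexifying the orthogonal $U$ of Theorem \ref{thm:A similar A^T} ``does the job.'' You have merely written out the routine bookkeeping (multiplicativity of the hat map, $(\hat B)^{*}=\widehat{B^{T}}$, and that orthogonal operators complexify to unitaries) that the paper leaves implicit.
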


\begin{proof}
Let $U$ be as in Theorem \ref{thm:A similar A^T}, then $\hat{U}$ is a unitary which does the job.
\end{proof}

\begin{cor} \label{cor:spectr-symmetry} For any real normal operator $A$,  
 $\sigma(A) =  \sigma(A^{T}) =\overline{\sigma(A)}$ and thus the spectrum is symmetric about the real axis.
\end{cor}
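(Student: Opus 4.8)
The plan is to extract everything from the two corollaries already in hand, together with one textbook fact about adjoints on a complex Hilbert space. Recall that by Definition \ref{def:spectrum} we have $\sigma(A)=\sigma(\hat A)$ and $\widehat{(A^T)}=(\hat A)^*$, so $\sigma(A^T)=\sigma\bigl((\hat A)^*\bigr)$; thus the whole statement is really a statement about $\hat A$ and its adjoint on $\mathcal H$.

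First I would invoke the elementary identity $\sigma(T^*)=\overline{\sigma(T)}$, valid for any bounded operator $T$ on a complex Hilbert space: $T-\lambda I$ is invertible if and only if its adjoint $T^*-\bar\lambda I$ is invertible, so $\lambda\in\sigma(T)$ iff $\bar\lambda\in\sigma(T^*)$. Applying this with $T=\hat A$ gives $\sigma(A^T)=\sigma\bigl((\hat A)^*\bigr)=\overline{\sigma(\hat A)}=\overline{\sigma(A)}$. Next I would use Corollary \ref{cor:unitary-equi}, which asserts that $\hat A$ and $(\hat A)^*$ are unitarily equivalent; since unitarily equivalent operators have the same spectrum, $\sigma(A)=\sigma(\hat A)=\sigma\bigl((\hat A)^*\bigr)=\sigma(A^T)$. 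Chaining the two conclusions yields $\sigma(A)=\sigma(A^T)=\overline{\sigma(A)}$, and the last equality says exactly that $\sigma(A)$ is invariant under complex conjugation, i.e. symmetric about the real axis.

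There is no real obstacle here; the only point needing any care is the standard identity $\sigma(T^*)=\overline{\sigma(T)}$. As an alternative route that avoids even Corollary \ref{cor:unitary-equi}, one can argue directly with the conjugation $\mathcal J$: since $\hat A$ is a complexification it satisfies $\hat A\mathcal J=\mathcal J\hat A$, and because $\mathcal J$ is antilinear with $\mathcal J^2=I$ one checks that $\mathcal J(\hat A-\lambda I)\mathcal J=\hat A-\bar\lambda I$; as $\mathcal J$ is an involutive antilinear isometry this already forces $\sigma(\hat A)=\overline{\sigma(\hat A)}$, and then $\sigma(A^T)=\overline{\sigma(A)}=\sigma(A)$ follows as before.
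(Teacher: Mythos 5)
Your argument is correct and is exactly the paper's (one-line) proof spelled out in detail: the paper also derives the statement from Definition~\ref{def:spectrum}, Corollary~\ref{cor:unitary-equi}, and the standard identity $\sigma(T^*)=\overline{\sigma(T)}$. Your alternative route via the conjugation $\mathcal J$ is also sound, but the main line of reasoning is the same as the paper's.
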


\begin{proof}
Immediate from Definition \ref{def:spectrum} and Corollary  \ref{cor:unitary-equi}
\end{proof}

 Note that Corollary \ref{cor:spectr-symmetry} is analogous to the fact that complex eigenvalues of a real matrix occur in pairs.

\begin{thm}\label{cor:invertible}
  If $B$ is a skew symmetric invertible operator on a real Hilbert space $H$, then there exists a real Hilbert space $K$, a positive invertible operator $P$ on $K$ and a real orthogonal transformation $V \colon H \to K \oplus K $ such that

  \begin{equation}
    \label{eq:skew}
    B = V^{T} \begin{bmatrix}
               0   & -P  \\
               P   & 0
             \end{bmatrix} V.
  \end{equation}
\end{thm}
\begin{proof}
  Assume first that $B$ has a cyclic vector $x$.
 Note that if $B$ is skew-symmetric, $B^{2k+1}$ is skew-symmetric and $B^{2k}$ is symmetric for $k\in \mathbb{N}$. Therefore  $\langle x, B^{2k+1}x\rangle = 0$ for all $k\geq 0$  and moreover,
 \begin{equation}\label{eq:ip}
 \langle B^{2n}x, B^{2m+1}x\rangle = 0, \phantom{...}\forall n,m \geq 0, 
 \end{equation} 
 Set $K = \overline{\spn}\{x, B^2x,B^4x, \dots\}$ and $N =  \overline{\spn}\{Bx, B^3x,B^5x, \dots\}$. Then $K \perp N$ by (\ref{eq:ip}) and since $x$ is cyclic $N=K^{\perp}$. Therefore, there exists an operator $R: K\rightarrow N$ defined  by 
 $R := B|_{K}$. Then it is easily seen that,
 \begin{equation}\label{eq:A}
  B  = \begin{bmatrix}
   0 & -R^{T}\\
   R & 0
  \end{bmatrix},
  \end{equation}
  in the direct sum decomposition $H= K\oplus N$. Since $B(B^{2n}x) = B^{2n+1}x$, $R$ maps $K$ onto $N$ and  since $B$ is invertible  $R$ is an invertible operator. Now we apply polar decomposition to $R$. If $R= UP$ then $U:K\rightarrow N$ and $P:K\rightarrow K$ are such that $U$ is orthogonal (because $R$ is invertible) and $P$ ($=\sqrt[]{R^{T}R}$) is positive definite and invertible. 
 
  Now we have \begin{equation}
 B = \begin{bmatrix}
  I_K & 0\\
  0 & U
 \end{bmatrix} \begin{bmatrix}
   0 & -P\\
   P & 0
  \end{bmatrix} \begin{bmatrix}
  I_K & 0\\
  0 & U^{\tau}
  \end{bmatrix}, 
  \end{equation}
  where $I_K $ is the identity operator on $K$ and $\begin{bsmallmatrix}
  I_K & 0\\
  0 & U
  \end{bsmallmatrix} : K\oplus K \rightarrow K\oplus N$ is orthogonal. 
 
  If $B$ doesn't have a cyclic vector then a usual argument using Zorn's lemma along with a permutation proves the result.
\end{proof}

\section{Williamson's Normal Form}

We will use Theorem \ref{cor:invertible} to give a proof of Williamson's normal form in the infinite dimensional set up.  We refer to Parthasarathy \cite{Par13} for an easy proof of this theorem in the finite dimensional setup and we extend it.

\begin{defn}\label{def:J-operator}
  Let $H$ be a real Hilbert space and $I$ be the identity operator on $H$. Define the involution operator $J$ on $H \oplus H$ by
$J = \begin{bmatrix}
         0 & -I\\
         I & 0
     \end{bmatrix}.$
\end{defn}

\begin{defn}
 Let $H$ and $K$ be two real Hilbert spaces. A bounded invertible linear operator $Q \colon H\oplus H \to K\oplus K $  is called a symplectic transformation if $Q^{T}JQ = J$, where $J$ on left side is the involution operator on $K\oplus K$ and that on the right side it is the involution operator on $ H\oplus H$.
\end{defn}

Here is the main Theorem.

\begin{thm}\label{sec:will-norm-form}
  Let $H$ be a real Hilbert space and $A$ be a strictly positive invertible operator on $H\oplus H$ then there exists

a Hilbert space $K$, a positive invertible operator $P$ on $K$ and a symplectic transformation $ L\colon H\oplus H \rightarrow K \oplus K$

 such that   \begin{equation}
    \label{eq:will-norm-form}
    A = L^{T} \begin{bmatrix}
              P & 0  \\
              0 & P
             \end{bmatrix} L.
  \end{equation}

The decomposition is unique in the sense that if $M$ is any strictly positive invertible operator on a Hilbert space $\tilde{H}$ and $\tilde{L}\colon H\oplus H \rightarrow \tilde{H} \oplus \tilde{H}$ is a symplectic transformation such that
 \begin{equation}\label{eq:uniqueness}
    A = \tilde{L}^{T} \begin{bmatrix}
              M & 0  \\
              0 & M
             \end{bmatrix} \tilde{L},
  \end{equation}
then

$P$ and $M$ are orthogonally equivalent.
\end{thm}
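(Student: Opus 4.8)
The plan is to reduce the whole statement to Corollary~\ref{cor:invertible} (the structure theorem for skew-symmetric invertible operators) together with the polar decomposition theorem above, by applying both to the single skew-symmetric operator $A^{1/2}JA^{1/2}$, where $J$ is the relevant involution of Definition~\ref{def:J-operator} and $A^{1/2}$ exists and is strictly positive invertible by Corollary~\ref{cor:sqr-rt}. Note $J^{T}=-J$, $J^{-1}=-J$, and that $A^{1/2}JA^{1/2}$ is skew-symmetric (since $(A^{1/2}JA^{1/2})^{T}=A^{1/2}J^{T}A^{1/2}=-A^{1/2}JA^{1/2}$) and invertible.

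\textbf{Existence.} Apply Corollary~\ref{cor:invertible} to $A^{1/2}JA^{1/2}$ to get a real Hilbert space $K$, a positive invertible operator $P$ on $K$, and an orthogonal $V\colon H\oplus H\to K\oplus K$ with $A^{1/2}JA^{1/2}=V^{T}\begin{bsmallmatrix}0&-P\\P&0\end{bsmallmatrix}V$. I claim $L:=\begin{bsmallmatrix}P^{-1/2}&0\\0&P^{-1/2}\end{bsmallmatrix}VA^{1/2}$ is the required symplectic transformation. First, using $V^{T}V=I$ one computes directly that $L^{T}\begin{bsmallmatrix}P&0\\0&P\end{bsmallmatrix}L=A^{1/2}A^{1/2}=A$, which is \eqref{eq:will-norm-form}. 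Second, inverting the displayed identity (using $J^{-1}=-J$ and $\begin{bsmallmatrix}0&-P\\P&0\end{bsmallmatrix}^{-1}=-\begin{bsmallmatrix}0&-P^{-1}\\P^{-1}&0\end{bsmallmatrix}$) yields $A^{-1/2}JA^{-1/2}=V^{T}\begin{bsmallmatrix}0&-P^{-1}\\P^{-1}&0\end{bsmallmatrix}V$, and since $\begin{bsmallmatrix}P^{-1/2}&0\\0&P^{-1/2}\end{bsmallmatrix}J\begin{bsmallmatrix}P^{-1/2}&0\\0&P^{-1/2}\end{bsmallmatrix}=\begin{bsmallmatrix}0&-P^{-1}\\P^{-1}&0\end{bsmallmatrix}$, this gives $L^{T}JL=A^{1/2}\big(A^{-1/2}JA^{-1/2}\big)A^{1/2}=J$; as $L$ is bounded and invertible, it is symplectic.

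\textbf{Uniqueness.} The heart of the matter is that $A^{1/2}JA^{1/2}$, taken \emph{up to orthogonal equivalence}, is an invariant of $A$ that determines $P$. Suppose $A=R^{T}\begin{bsmallmatrix}N&0\\0&N\end{bsmallmatrix}R$ with $R$ symplectic and $N$ positive invertible (both decompositions in the statement are instances). Put $\Delta=\begin{bsmallmatrix}N&0\\0&N\end{bsmallmatrix}$; then $A=(\Delta^{1/2}R)^{T}(\Delta^{1/2}R)$, so by the polar decomposition theorem $\Delta^{1/2}R=OA^{1/2}$ for an orthogonal $O$, whence $A^{1/2}=O^{T}\Delta^{1/2}R$ and, transposing, $A^{1/2}=R^{T}\Delta^{1/2}O$. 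Therefore
\[
A^{1/2}JA^{1/2}=O^{T}\Delta^{1/2}\big(RJR^{T}\big)\Delta^{1/2}O=O^{T}\Delta^{1/2}J\Delta^{1/2}O=O^{T}\begin{bmatrix}0&-N\\N&0\end{bmatrix}O ,
\]
using $RJR^{T}=J$, which follows from symplecticity of $R$. Hence $A^{1/2}JA^{1/2}$ is orthogonally equivalent to $\begin{bsmallmatrix}0&-N\\N&0\end{bsmallmatrix}$; in particular $\begin{bsmallmatrix}0&-P\\P&0\end{bsmallmatrix}$ and $\begin{bsmallmatrix}0&-M\\M&0\end{bsmallmatrix}$ are orthogonally equivalent. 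Since orthogonal conjugation commutes with $T\mapsto(T^{T}T)^{1/2}$ and $\big(\begin{bsmallmatrix}0&-N\\N&0\end{bsmallmatrix}^{T}\begin{bsmallmatrix}0&-N\\N&0\end{bsmallmatrix}\big)^{1/2}=\begin{bsmallmatrix}N&0\\0&N\end{bsmallmatrix}$, it follows that $\begin{bsmallmatrix}P&0\\0&P\end{bsmallmatrix}$ and $\begin{bsmallmatrix}M&0\\0&M\end{bsmallmatrix}$ are orthogonally equivalent, i.e.\ $P\oplus P$ and $M\oplus M$ are. Finally $P$ and $M$ are themselves orthogonally equivalent: $P\oplus P$ has the same scalar spectral measure class as $P$ and multiplicity function equal to that of $P$ doubled, and $n\mapsto 2n$ is injective on $\{1,2,\dots\}\cup\{\infty\}$ (with $2\cdot\infty=\infty$), so by the multiplicity theory of real self-adjoint operators $P$ and $M$ share the same invariants.

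The step I expect to be the genuine obstacle is this uniqueness argument: recognizing that $A^{1/2}JA^{1/2}$ modulo orthogonal equivalence is the correct invariant, and that a naive argument only produces a \emph{similarity} between $\begin{bsmallmatrix}0&-P\\P&0\end{bsmallmatrix}$ and $\begin{bsmallmatrix}0&-M\\M&0\end{bsmallmatrix}$, so one needs the polar decomposition identity above to upgrade it to an orthogonal equivalence; and then the concluding de-doubling $P\oplus P\cong M\oplus M\Rightarrow P\cong M$, which really requires multiplicity theory — a subtlety absent from the finite-dimensional proof, where similarity alone already forces equality of the symplectic eigenvalues. By contrast, existence is a purely formal computation once Corollary~\ref{cor:invertible} and the polar decomposition theorem are in hand.
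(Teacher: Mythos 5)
Your proof is correct, and while the existence half coincides with the paper's (same $B=A^{1/2}JA^{1/2}$, same application of Corollary~\ref{cor:invertible}, same $L=\operatorname{diag}(P^{-1/2},P^{-1/2})\,\Gamma^{T}A^{1/2}$), your uniqueness argument takes a genuinely different route. The paper forms $N=L\tilde L^{-1}$, obtains only a \emph{similarity} $N^{-1}\begin{bsmallmatrix}0&P\\-P&0\end{bsmallmatrix}N=\begin{bsmallmatrix}0&M\\-M&0\end{bsmallmatrix}$, and then upgrades it to an orthogonal equivalence by a Fuglede--Putnam-style commutation argument: transposing and combining shows $(N^{-1})^{T}N^{-1}$, hence its positive square root, commutes with $\begin{bsmallmatrix}0&P\\-P&0\end{bsmallmatrix}$, so the orthogonal factor of the polar decomposition of $N^{-1}$ implements the equivalence. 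You instead observe that any symplectic decomposition $A=R^{T}\Delta R$ gives $A=(\Delta^{1/2}R)^{T}(\Delta^{1/2}R)$, so uniqueness of the polar decomposition yields $\Delta^{1/2}R=OA^{1/2}$ with $O$ orthogonal, and hence $A^{1/2}JA^{1/2}$ is orthogonally equivalent to $\Delta^{1/2}J\Delta^{1/2}=\begin{bsmallmatrix}0&-N\\N&0\end{bsmallmatrix}$ outright --- identifying $A^{1/2}JA^{1/2}$ modulo orthogonal equivalence as the invariant and bypassing the commutation computation entirely (your use of $RJR^{T}=J$, obtained by inverting $R^{T}JR=J$, is legitimate). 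Both arguments then funnel into the same final step, $P\oplus P\cong M\oplus M\Rightarrow P\cong M$ (you apply it to $P\oplus P$ directly, the paper to $P^{2}\oplus P^{2}$ after squaring), which is exactly the paper's Lemma~\ref{lem:direct-sum-unitary} via multiplicity theory; like the paper, your de-doubling implicitly needs separability of the underlying space, which the theorem statement does not announce but the paper's own proof also assumes. Net effect: your version is somewhat shorter and makes the invariant conceptually explicit, while the paper's version isolates the ``similar skew-symmetric operators are orthogonally equivalent'' mechanism, which is of independent interest.
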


\begin{proof}
  Define $B= A^{1/2}JA^{1/2}$,
  and let $J$ be the involution as in Definition \ref{def:J-operator}. Then $B$ is a skew symmetric invertible operator on $H\oplus H$.
 Hence by Theorem \ref{cor:invertible}  there exists a real Hilbert space $K$, an invertible positive operator $P$ and  a real orthogonal transformation $V \colon H \to K\oplus K $ such that
\begin{equation} \label{eq:B}
    B  = V^T \begin{bmatrix}
                          0 & -P   \\
                          P & 0
                         \end{bmatrix}V.
  \end{equation}

Define $L \colon  H\oplus H \to K\oplus K ,$ by 
 \begin{equation}
   L      = \begin{bmatrix}
                          P^{-1/2}  & 0  \\
                           0 &  P^{-1/2}
                         \end{bmatrix} VA^{\frac{1}{2}}.
  \end{equation}

Then clearly (\ref{eq:will-norm-form}) is satisfied.  A direct computation using (\ref{eq:B}) shows that $L$ is symplectic, that is $
LJL^T = J$, where $J$ on the left side  is the involution operator on $H\oplus H$ and on the right side is the corresponding involution operator on $K\oplus K.$  

To prove the uniqueness,  let 

\begin{equation*}
    A =L^{T}\begin{bmatrix}
              P & 0  \\
               0  & P
             \end{bmatrix} L
      = \tilde{L}^{T} \begin{bmatrix}
              M & 0  \\
              0 & M
             \end{bmatrix} \tilde{L},
  \end{equation*} where $P$,  $M$ are two positive operators and $L$, $\tilde{L}$ are symplectic.
 Putting $N=L\tilde{L}^{-1}$ we get a symplectic $N$ such that
\begin{equation*}
   N^{T} \begin{bmatrix}
              P & 0  \\
               0                     & P
             \end{bmatrix} N
      =  \begin{bmatrix}
              M & 0  \\
              0 & M
             \end{bmatrix}.
\end{equation*}
Substituting $N^{T} = JN^{-1}J^{-1}$ with appropriate $J$'s we get

\begin{equation}\label{eq:2}
  N^{-1}\begin{bmatrix}
              0 &  P   \\
               - P & 0
             \end{bmatrix} N
    = \begin{bmatrix}
              0 & M  \\
              -M & 0
             \end{bmatrix}.
\end{equation}

We wish to replace the $N$ in previous equation with an orthogonal map. Now we may recall the fact that two similar normal operators are unitarily equivalent (this can be proved using Fuglede-Putnam theorem, see Theorem 12.36 in \cite{Rud91} and real case follows by complexification). However, we continue with our proof without using this result. To this end, taking transpose on both sides of (\ref{eq:2}) we get

 \begin{equation*}
     N^{T}\begin{bmatrix}
              0 &  P   \\
               - P & 0
             \end{bmatrix} (N^{T})^{-1}
    = \begin{bmatrix}
              0 & M  \\
              -M & 0
             \end{bmatrix}.
 \end{equation*}
Hence again by using (\ref{eq:2}) we get
\begin{equation*}
  N^{T}\begin{bmatrix}
              0 &  P   \\
               - P & 0
             \end{bmatrix} (N^{T})^{-1}
  =  N^{-1}\begin{bmatrix}
              0 &  P   \\
               - P & 0
             \end{bmatrix} N,
\end{equation*}
  or
\begin{equation*}
  \begin{bmatrix}
              0 &  P   \\
               - P & 0
             \end{bmatrix} (N^{-1})^{T}N^{-1}
  =  (N^{-1})^{T}N^{-1}\begin{bmatrix}
              0 &  P   \\
               - P & 0
             \end{bmatrix}.
\end{equation*}
This implies
\begin{equation}\label{eq:1}
   \begin{bmatrix}
              0 &  P   \\
               - P & 0
             \end{bmatrix} \big((N^{-1})^{T}N^{-1}\big)^{1/2}
  = \big( (N^{-1})^{T}N^{-1}\big)^{1/2} \begin{bmatrix}
              0 &  P   \\
               - P & 0
             \end{bmatrix},
\end{equation}
where the reasoning for (\ref{eq:1}) is same as that in the complex case. Let $N^{-1} = U\big( (N^{-1})^{T}N^{-1}\big)^{1/2}$ be the polar decomposition of $N^{-1}$. From (\ref{eq:2}) we get
\begin{equation*}
  U\big( (N^{-1})^{T}N^{-1}\big)^{1/2}\begin{bmatrix}
              0 &  P   \\
               - P & 0
             \end{bmatrix}\big( (N^{-1})^{T}N^{-1}\big)^{-1/2} U^{T}
    = \begin{bmatrix}
              0 & M  \\
              -M & 0
             \end{bmatrix}.
\end{equation*}
Hence by (\ref{eq:1}) we have
\begin{equation}\label{eqW:skew-unitary2}
    U\begin{bmatrix}
              0 &  P   \\
               - P & 0
             \end{bmatrix}U^{T}
    = \begin{bmatrix}
              0 & M  \\
              -M & 0
             \end{bmatrix}.
\end{equation}
Now we will prove that (\ref{eqW:skew-unitary2}) implies  that $P$ and $M$ are orthogonally equivalent. Note that by taking squares, and getting rid of the negative sign,
\begin{equation*}
              U \begin{bmatrix}
              P^2 &  0   \\
                 0 & P^2
             \end{bmatrix}U^{T} =\begin{bmatrix}
              M^2 &  0   \\
                 0 &  M^2
             \end{bmatrix}.
\end{equation*} It is true that if $A$ and $B$ are self adjoint operators such that $A\oplus A$ and $B \oplus B$ are orthogonally equivalent then $A$ and $B$ are orthogonally equivalent. We will give a proof of this as a Lemma below. But if we assume this fact our proof is complete because we see that for the positive operators $P$ and $M$, $P^2$ and $M^2$ are orthogonally equivalent. Hence $P$ and $M$ are orthogonally equivalent.
\end{proof}

 Now we will prove an important corollary of the above theorem. It states that the Hilbert space $K$ in Theorem \ref{sec:will-norm-form} can be chosen to be $H$ itself. 
\begin{cor}\label{sec:will-norm-form-1}
Let $H$ be a real Hilbert space and $A$ be a strictly positive invertible operator on $H\oplus H$ then there exists a positive invertible operator $D$ on $H$ and a symplectic transformation $ M\colon H\oplus H \rightarrow H \oplus H$
 such that   \begin{equation}
    \label{eq:will-norm-form-1}
    A = M^{T} \begin{bmatrix}
              D & 0  \\
              0 & D
             \end{bmatrix} M.
  \end{equation}
  Further, $D$ is unique up to a conjugation with an orthogonal transformation.
\end{cor}
\begin{proof}
Let $L$ and $P$ be as in Theorem \ref{sec:will-norm-form}. Since $L$ is invertible $H$ and $K$ are of same dimension. Choose and fix any orthogonal transformation $U_0: K \rightarrow H$. Then $U = \begin{bmatrix}U_0  & 0 \\ 0 & U_0\end{bmatrix}$ is a symplectic (and orthogonal) transformation. Now by (\ref{eq:will-norm-form}) 
\begin{align*}
    A &= L^{T} \begin{bmatrix}
              P & 0  \\
              0 & P
             \end{bmatrix} L\\
             &= L^{T}U^TU \begin{bmatrix}
              P & 0  \\
              0 & P
             \end{bmatrix}U^TU L\\
             &= M^T\begin{bmatrix}
              D & 0  \\
              0 & D
             \end{bmatrix}M,
\end{align*}
where  $M:= UL$ is symplectic on $H\oplus H$ and $D:= U_0PU_0^T$ is a strictly positive and invertible operator on $H$.
\end{proof}
Now we proceed to provide the proof of a lemma we promised during the proof of Theorem \ref{sec:will-norm-form}. We depend on Hall \cite{Hall13} for notations and results used below. We write the following in the framework of complex Hilbert spaces, but the spectral theory of a self-adjoint operator is identical on both real and complex Hilbert spaces \cite{RiN90} and hence what we write below works on separable real Hilbert spaces also.

By the direct integral version of spectral theorem, any bounded self-adjoint operator $A$ on a separable Hilbert space is unitarily equivalent to the multiplication operator 
 $s\mapsto xs$  where $xs(\lambda) := \lambda s(\lambda ), \lambda \in \sigma(A)$ on $\int_{\sigma(A)}^{\oplus}\mathcal{H}_{\lambda}d\,\mu(\lambda)$ for some $\sigma$-finite measure $\mu$ with a measurable family  of Hilbert spaces $\{\mathcal{H}_{\lambda}\}$, satisfying dim$( \mathcal{H}_{\lambda})>0   $ almost everywhere $\mu .$ It is understood that we work with the Borel subsets of the spectrum $\sigma(A)$.  The function  $\lambda \mapsto \dim (\mathcal{H}_{\lambda})$ is called the multiplicity function associated with the direct integral representation of $A$.
By Proposition 7.24 from \cite{Hall13}, two bounded self-adjoint operators expressed as direct integrals on their spectrum are unitarily equivalent if and only if (i) the spectrum are same; (ii) the associated measures are equivalent in the sense that they are mutually absolutely continuous and (iii) the multiplicity functions coincide almost everywhere.

\begin{lem}\label{lem:direct-sum-unitary}
Let $A, B$ be self-adjoint operators on separable Hilbert space such that $A \oplus A$ and $B \oplus B$ are unitarily equivalent. Then $A$ and $B$ are unitarily equivalent.
\end{lem}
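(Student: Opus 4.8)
The plan is to apply the direct-integral classification just recalled (Proposition 7.24 of \cite{Hall13}) twice: once to unpack the hypothesis $A\oplus A$ unitarily equivalent to $B\oplus B$, and once to draw the conclusion.

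First I would put $A$ and $B$ in direct-integral form on their spectra. So $A$ is unitarily equivalent to multiplication by $\lambda$ on $\int_{\sigma(A)}^{\oplus}\mathcal{H}_{\lambda}\,d\mu(\lambda)$ for some $\sigma$-finite measure $\mu$ and a measurable family $\{\mathcal{H}_\lambda\}$ with $\dim\mathcal{H}_\lambda>0$ almost everywhere; write $m_A(\lambda)=\dim\mathcal{H}_\lambda$ for its multiplicity function, and let $(\nu,\{\mathcal{K}_\lambda\},m_B)$ be the corresponding data for $B$. The point is that $A\oplus A$ is then realized, via the standard identification $\left(\int_{\sigma(A)}^{\oplus}\mathcal{H}_\lambda\,d\mu(\lambda)\right)\oplus\left(\int_{\sigma(A)}^{\oplus}\mathcal{H}_\lambda\,d\mu(\lambda)\right)\cong\int_{\sigma(A)}^{\oplus}(\mathcal{H}_\lambda\oplus\mathcal{H}_\lambda)\,d\mu(\lambda)$, as multiplication by $\lambda$; since $\sigma(A\oplus A)=\sigma(A)$, this is a direct-integral representation of $A\oplus A$ on its spectrum with base measure $\mu$ and multiplicity function $\lambda\mapsto 2m_A(\lambda)$ (understanding $2\kappa=\kappa$ for an infinite cardinal $\kappa$). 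Likewise $B\oplus B$ has data $(\nu,\,2m_B)$ on $\sigma(B)$.

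Next, applying Proposition 7.24 to the assumed unitary equivalence of $A\oplus A$ and $B\oplus B$ yields: (i) $\sigma(A)=\sigma(B)$; (ii) $\mu$ and $\nu$ are mutually absolutely continuous; and (iii) $2m_A=2m_B$ $\mu$-almost everywhere. It remains to promote (iii) to $m_A=m_B$ a.e. Because the ambient Hilbert space is separable, $m_A(\lambda),m_B(\lambda)\in\{1,2,3,\dots\}\cup\{\aleph_0\}$, and the doubling map $n\mapsto 2n$ on finite $n$, $\aleph_0\mapsto\aleph_0$, is injective on this set; hence $2m_A(\lambda)=2m_B(\lambda)$ forces $m_A(\lambda)=m_B(\lambda)$. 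Thus $A$ and $B$ have equal spectra, mutually absolutely continuous base measures, and a.e.\ equal multiplicity functions, so Proposition 7.24 applied in the reverse direction gives that $A$ and $B$ are unitarily equivalent.

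The only step that is not pure bookkeeping is the cardinal-arithmetic observation in (iii): one must rule out, say, $m_A=1$ and $m_B=\aleph_0$ on a set of positive measure, which is precisely what separability does by confining the multiplicities to $\{1,2,\dots,\aleph_0\}$, where doubling is injective. Everything else is an immediate consequence of the direct-integral classification together with the identification of a twofold direct sum of a direct integral with the direct integral of the twofold sums of the fibres. As remarked before the lemma, the argument is insensitive to the real-versus-complex distinction, so it applies equally on separable real Hilbert spaces, which is the case actually needed in the proof of Theorem \ref{sec:will-norm-form}.
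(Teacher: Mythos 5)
Your proof is correct and follows essentially the same route as the paper: represent $A$ and $B$ as direct integrals, identify $A\oplus A$ with the direct integral of the fibres $\mathcal{H}_\lambda\oplus\mathcal{H}_\lambda$ over the same base measure, and invoke Proposition 7.24 of \cite{Hall13} to compare spectra, measure classes and multiplicity functions. You in fact make explicit the one point the paper leaves implicit, namely that doubling is injective on the possible multiplicities $\{1,2,\dots\}\cup\{\aleph_0\}$, so $2m_A=2m_B$ a.e.\ forces $m_A=m_B$ a.e.
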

\begin{proof}
Let $A$ be unitarily equivalent to the multiplication operator 
for each section $s$, with respect to a measure $\mu$ on $\sigma (A)$ in the direct integral Hilbert space 
\[\int_{\sigma(A)}^{\oplus}\mathcal{H}_{\lambda}d\,\mu(\lambda).\] Then it can be seen that $A \oplus A$ is unitarily equivalent to the multiplication operator on the direct integral 
\[\int_{\sigma(A)}^{\oplus}\mathcal{K}_{\lambda}d\,\mu(\lambda) , \]  where 
$\mathcal{K}_{\lambda} = \mathcal{H}_{\lambda} \oplus \mathcal{H}_{\lambda}.$ Since $A \oplus A$ and $B \oplus B$ are unitarily equivalent, by the uniqueness of integral representation mentioned above,  by comparing spectrum, measures and multiplicity functions it is easy to see that $A$ and $B$ are unitarily equivalent.
\end{proof}

\begin{rmk}
We observe that Lemma \ref{lem:direct-sum-unitary} can be proved using standard versions of Hahn-Hellinger theorem also, for example Theorem 7.6 in \cite{Par12} can also be used. We also note that if we take infinitely many copies of self-adjoint operators $A,B$ and $\oplus _{i=1}^{\infty} A$ is unitarily equivalent to 
$\oplus _{i=1}^{\infty} B$, does not mean that $A$ and $B$ are unitarily equivalent. So it is only natural that the multiplicity theory is required in the proof of last Lemma. The Lemma \ref{lem:direct-sum-unitary} is true even without the assumption of self-adjointness of $A$ and $B$ as seen by \cite{kadison1957}.
\end{rmk}

\begin{rmk}
Under the situation of Theorem 5.3, in view of the uniqueness part of the theorem, the spectrum of $P$, can be defined as the {\em symplectic spectrum\/}  of the positive invertible operator $A$.
\end{rmk}
\textbf{Conclusion:} 
As suggested in the introduction of this article, finite dimensional Williamson's normal form is widely used in several areas.  It is expected that the main theorem of this article will be useful in all these areas when one wants to discuss infinite dimensional analogues. One particular case is \cite{BhJoSr18},  where non-commutative analogues of classical Gaussian distributions called quantum Gaussian states are studied.  
Strictly positive, invertible operators occur as covariance operators of quantum Gaussian states. Theorem \ref{sec:will-norm-form} and Corollary \ref{sec:will-norm-form-1} are repeatedly used in \cite{BhJoSr18} to obtain various results about covariance operators and Gaussian states. One of the main results there is that the symplectic spectrum, which was defined as a consequence of our main theorem, is a complete invariant for the class of covariance operators associated with quantum Gaussian states.  It is clear that the Williamson's normal form in infinite dimension  is indispensable  in this context. 

\textbf{Acknowledgements:} We wish to thank Prof. K. R. Parthasarathy for introducing the subject to us during his 2015 lecture series at the Indian Statistical Institute, Bangalore centre. We also thank Prof. K. B. Sinha for the fruitful discussions we had with him on direct integrals. We are grateful to the anonymous referee  for a positive report and very constructive suggestions. Bhat thanks J C Bose Fellowship for financial support.

\bibliographystyle{amsalpha}
\bibliography{bibliography}

\end{document}